\documentclass[12pt]{amsart}
\usepackage{amsmath}
\usepackage{amscd}
\usepackage{amssymb}
\usepackage{amsfonts}

\newtheorem{theorem}{Theorem}[section]

\newtheorem{proposition}[theorem]{Proposition}

\theoremstyle{definition}

\theoremstyle{remark}
\newtheorem{remark}[theorem]{Remark}
\numberwithin{equation}{section}

\begin{document}
\title[K\"ahler--Ricci shrinkers with CSC]
{K\"ahler--Ricci Shrinkers with Constant Scalar Curvature Are Rigid}

\author{Wenqi Li}
\address{School of Mathematical Sciences, Key Laboratory of MEA (Ministry of
Education) and   Shanghai Key Laboratory of PMMP, East China Normal University, Shanghai
200241, China}
\email{51255500054@stu.ecnu.edu.cn}
\author{Jianyu Ou}
\address{School of Mathematical Sciences, Xiamen University, Xiamen 361005, China}
\email{oujianyu@xmu.edu.cn}
\author{Guoqiang Wu}
\address{School of Science, Zhejiang Sci-Tech University, Hangzhou 310018, China}
\email{gqwu@zstu.edu.cn}

\author{Detang Zhou}
\address{Instituto de Matem\'atica e Estat\'istica, Universidade Federal Fluminense, S\~ao Domingos, Niter\'oi, RJ 24210-201, Brazil}
\email{zhoud@id.uff.br}
\thanks{}
\subjclass[2020]{Primary 53C25; Secondary 53C20, 53C21.}
\dedicatory{}
\date{\today}

\keywords{K\"ahler--Ricci shrinker, gradient Ricci soliton, rigidity, constant scalar curvature.}
\begin{abstract}
Let $(M^{2m},g,f,J)$ be a complete gradient K\"ahler--Ricci shrinker satisfying
\[
\operatorname{Ric}+\nabla^2 f=\frac12 g.
\]
We prove that if the scalar curvature is constant, then the soliton is rigid.
More precisely, there exists an integer $k\in\{0,\ldots,m\}$ such that
\[
R\equiv k
\]
and
\[
(M^{2m},g,J)\cong
\bigl(N^{2k}\times\mathbb C^{m-k},
 g_N+g_{\mathrm{Euc}},J_N\oplus J_0\bigr),
\]
where $(N^{2k},g_N,J_N)$ is K\"ahler--Einstein and
\[
\operatorname{Ric}_{g_N}=\frac12 g_N.
\]
As part of the proof, we establish a Riemannian rigidity criterion:
for a complete nonsteady gradient Ricci soliton with constant scalar curvature,
the condition $\mathcal{L}_{\nabla f}\operatorname{Ric}=0$ implies radial
flatness and hence rigidity.
\end{abstract}
\maketitle

\section{Introduction}

A K\"ahler Ricci shrinker \((M,g,f,J)\) is a complete shrinking gradient Ricci soliton whose metric is K\"ahler and whose gradient vector field \(\nabla f\) is holomorphic. These objects arise naturally as models for finite-time singularities of the K\"ahler--Ricci flow and serve as noncompact generalizations of positive K\"ahler--Einstein metrics.

K\"ahler--Ricci shrinker surfaces have recently been completely classified. The classification theorem states that every K\"ahler Ricci shrinker surface is biholomorphic-isometric to one of the following: a closed del Pezzo surface with its unique shrinker metric; the Gaussian soliton on \(\mathbb{C}^2\); the Feldman--Ilmanen--Knopf shrinker on the blowup of \(\mathbb{C}^2\); the standard cylinder \(\mathbb{P}^1 \times \mathbb{C}\); or the BCCD shrinker on the blowup of \(\mathbb{P}^1 \times \mathbb{C}\).  Earlier classification results under bounded curvature assumptions were completed by Conlon--Deruelle--Sun \cite{CDS} and Bamler--Cifarelli--Conlon--Deruelle \cite{BCCD}. Later, Li--Wang \cite{Li-Wang} proved that every K\"ahler--Ricci shrinker surface has bounded sectional curvature, thereby removing the curvature hypothesis.

In this paper, we focus on gradient shrinking Ricci solitons with constant scalar curvature, with particular emphasis on the K\"ahler case. Recall that in Petersen--Wylie \cite{Petersen-Wylie2}, a gradient Ricci soliton $(M, g)$ is said to be rigid if it is isometric to a quotient of ${N} \times \mathbb{R}^k$, the product soliton of an Einstein manifold ${N}$ of positive scalar curvature with the Gaussian soliton $\mathbb{R}^k$.

For complete shrinkers, Huai-Dong Cao conjectured that $(M^n, g, f)$ has constant scalar curvature if and only if it is rigid, i.e., a finite quotient of ${N}^k \times \mathbb{R}^{n-k}$ for some Einstein manifold ${N}$ of positive scalar curvature; see \cite{Cheng-Zhou}.

Regarding this conjecture, Fern\'{a}ndez-L\'{o}pez--Garc\'{i}a-R\'{i}o \cite{FR10} and Munteanu--Sesum \cite{Munteanu-Sesum} proved that $n$-dimensional complete gradient shrinking solitons with harmonic Weyl tensor are rigid. Catino--Mastrolia--Monticelli \cite{CMM17} showed that any gradient shrinking Ricci soliton with fourth-order divergence-free Weyl tensor is rigid.

Petersen and Wylie \cite{Petersen-Wylie2} proved that a complete gradient Ricci soliton is rigid if and only if it has constant scalar curvature and is radially flat, that is, the sectional curvature $K(\cdot, \nabla f) = 0$. Fern\'{a}ndez-L\'{o}pez and Garc\'{i}a-R\'{i}o \cite{FR16} proved that, for a gradient Ricci soliton with constant scalar curvature, rigidity is equivalent to constancy of the rank of the Ricci endomorphism. They also derived that the possible values of $R$ are $\{0, \lambda, \dots, (n-1)\lambda, n\lambda\}$.

Several years ago, Cheng and Zhou \cite{Cheng-Zhou} confirmed Cao's conjecture in dimension $n = 4$. For some progress on the higher dimensional case, see \cite{Li-Ou-Qu-Wu,Li-Ou-Qu-Wu2,Li-Ou-Qu-Wu curv,Li-Ou-Qu-Wu3}. Chen--Zhu \cite{Chen-Zhu} proved that a complete shrinking (or expanding)
gradient K\"ahler--Ricci soliton with harmonic Bochner tensor is rigid; in
particular, its universal cover splits holomorphically as a K\"ahler--Einstein
factor times a complex Euclidean factor. For more work on the geometry and
topology of K\"ahler--Ricci shrinkers, see \cite{Cao}, \cite{Munteanu-Wang2},
\cite{Wang-Zhu}, and \cite{Sun-Zhang}.

The main result of the paper is as follows.

\begin{theorem}\label{mainthm}
Let $(M^{2m},g,f,J)$ be a complete gradient K\"ahler--Ricci shrinker satisfying
\[
\operatorname{Ric}+\nabla^2 f=\frac12 g.
\]
Assume that the scalar curvature $R$ is constant. Then there exists an integer
$k\in\{0,\ldots,m\}$ such that
\[
R\equiv k,
\]
and $(M,g,J)$ is holomorphically and isometrically a product
\[
M^{2m}\cong N^{2k}\times\mathbb C^{m-k},
\]
where $(N^{2k},g_N,J_N)$ is a K\"ahler--Einstein manifold satisfying
\[
\operatorname{Ric}_{g_N}=\frac12 g_N.
\]
\end{theorem}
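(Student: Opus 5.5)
The plan is to reduce the theorem to the Riemannian rigidity criterion announced in the abstract. That criterion says that for a complete nonsteady gradient Ricci soliton with constant scalar curvature, $\mathcal L_{\nabla f}\operatorname{Ric}=0$ forces radial flatness, and hence rigidity by Petersen--Wylie \cite{Petersen-Wylie2}. The Kähler structure enters only through checking the hypothesis $\mathcal L_{\nabla f}\operatorname{Ric}=0$ and through upgrading the Riemannian splitting to a holomorphic one.

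\textbf{Step 1: $\operatorname{Ric}(\nabla f)=0$.} Start from the standard soliton identity $\nabla R=2\operatorname{Ric}(\nabla f)$. Since $R$ is constant, this gives $\operatorname{Ric}(\nabla f)=0$.

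\textbf{Step 2: $\mathcal L_{\nabla f}\operatorname{Ric}=0$.} Let $\rho=\operatorname{Ric}(J\cdot,\cdot)$ be the Ricci form, which is closed. Cartan's formula gives
\[
\mathcal L_{\nabla f}\rho=d(\iota_{\nabla f}\rho)+\iota_{\nabla f}d\rho=d(\iota_{\nabla f}\rho).
\]
Since $\operatorname{Ric}$ commutes with $J$, we have $\iota_{\nabla f}\rho=\pm\operatorname{Ric}(J\nabla f,\cdot)=\pm\langle J\operatorname{Ric}(\nabla f),\cdot\rangle=0$. Hence $\mathcal L_{\nabla f}\rho=0$. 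Because $\nabla f$ is real holomorphic, $\mathcal L_{\nabla f}J=0$, and so $\mathcal L_{\nabla f}\operatorname{Ric}=-\mathcal L_{\nabla f}(\rho(\cdot,J\cdot))=0$.

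\textbf{Step 3: the rigidity criterion.} Next prove the criterion itself; I expect this to be the main obstacle. Differentiating $\operatorname{Ric}(\nabla f)=0$ and using $\nabla^2 f=\frac12 g-\operatorname{Ric}$ gives
\[
(\nabla_X\operatorname{Ric})(\nabla f)=\operatorname{Ric}^2(X)-\tfrac12\operatorname{Ric}(X).
\]
The hypothesis $\mathcal L_{\nabla f}\operatorname{Ric}=0$ gives $\nabla_{\nabla f}\operatorname{Ric}=-2\operatorname{Ric}\circ\nabla^2 f=2\operatorname{Ric}^2-\operatorname{Ric}$. On the other hand, the second Bianchi identity for solitons relates $\nabla_{\nabla f}\operatorname{Ric}$ to the contraction of the curvature with $\nabla f$, that is, $\nabla_X\operatorname{Ric}(Y)-\nabla_Y\operatorname{Ric}(X)=\pm Rm(X,Y)\nabla f$. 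Combining these relations, the aim is to show that $Rm(\cdot,\nabla f)\nabla f$ and the mixed terms vanish, i.e.\ radial flatness. In parallel, the trace identity $\Delta_f R=R-2|\operatorname{Ric}|^2$ gives $|\operatorname{Ric}|^2=R/2$. The first thing I would try is to show that $\operatorname{Ric}^2=\frac12\operatorname{Ric}$, so that the eigenvalues of $\operatorname{Ric}$ lie in $\{0,\frac12\}$ with constant multiplicities. Here one uses $\operatorname{tr}(\operatorname{Ric}^2-\frac12\operatorname{Ric})=0$ together with the fact that $\nabla_{\nabla f}$ of eigenvalues is controlled by $2\lambda^2-\lambda$ along flow lines, which escape to infinity where $f$ is proper. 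Constant rank then yields rigidity via Fern\'andez-L\'opez--Garc\'ia-R\'io \cite{FR16}, or directly via radial flatness and Petersen--Wylie \cite{Petersen-Wylie2}.

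\textbf{Step 4: the Kähler splitting.} Once rigidity holds, $\ker\operatorname{Ric}$ and its orthogonal complement are parallel distributions. Both are $J$-invariant, because $\operatorname{Ric}$ commutes with $J$. Hence the rank of $\operatorname{Ric}$ is even, say $2k$, and $R=2k\cdot\frac12=k$. The de Rham decomposition of the universal cover is then holomorphic: it splits as $N^{2k}\times\mathbb C^{m-k}$ with $N$ Kähler--Einstein and $\operatorname{Ric}_{g_N}=\frac12 g_N$. Finally, remove the quotient. The potential is $f=\frac14|z|^2+\mathrm{const}$ on the $\mathbb C^{m-k}$ factor, and deck transformations preserve $f$ and $\nabla f$. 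They must therefore fix the zero set of the Euclidean part of $\nabla f$, namely $N\times\{0\}$, and act on $\mathbb C^{m-k}$ linearly. I would then argue, using that $\pi_1$ of a shrinker is finite and that $N$ is simply connected (Fano-type Kähler--Einstein), that the action is trivial, so that $M$ itself is the stated product.
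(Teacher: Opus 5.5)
Your outline follows the paper's architecture: the Ricci form and Cartan's formula give $\mathcal L_{\nabla f}\operatorname{Ric}=0$, then $A^2=\frac12A$, then radial flatness, then the K\"ahler splitting. However, the final step, removing the quotient, has a genuine gap. The facts you propose to use, finiteness of $\pi_1(M)$ and simple connectivity of $N$, cannot force the deck group $\Gamma$ to be trivial. First, $N$ is simply connected automatically, as a de Rham factor of the universal cover. Second, consider the Riemannian rigid shrinker $(S^2\times\mathbb R^2)/\mathbb Z_2$, with $\mathbb Z_2$ acting by $(x,z)\mapsto(-x,-z)$. It has finite $\pi_1$, and $N=S^2\cong\mathbb{CP}^1$ is K\"ahler--Einstein. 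Its deck action preserves $f$, $\nabla f$ and $N\times\{0\}$, and is linear on the flat factor. Yet $\Gamma\neq1$.

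What rules this example out is that deck transformations preserve the lifted complex structure. In the example the antipodal factor is antiholomorphic, so the quotient metric is not K\"ahler. Use your observation that each $\gamma\in\Gamma$ has the form $(\gamma_1,\gamma_2)$ with $\gamma_2$ linear. Then $\Gamma$ acts on $N\cong N\times\{0\}$ by holomorphic isometries. The action is free, since a fixed point $x$ of $\gamma_1$ would give the fixed point $(x,0)$ of $\gamma$. Hence $N/\Gamma$, which is the zero section $\Sigma$ of $M$, is a compact K\"ahler manifold with $\operatorname{Ric}=\frac12 g>0$. Kobayashi's theorem must be applied to this quotient, not to $N$; it gives $\Gamma\cong\pi_1(N/\Gamma)=1$. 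This is exactly the paper's route. It shows the zero section $\Sigma\subset M$ is K\"ahler--Einstein, hence compact and simply connected. It then uses the deformation retraction of $M$ onto $\Sigma$ to get $\pi_1(M)=0$ and a trivial flat bundle.

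Your route to $A^2=\frac12A$ in Step 3 differs from the paper's and is viable, but not for the reason you give. Flow lines escaping to infinity produce no contradiction, since eigenvalues in $(0,\frac12)$ simply decay to $0$. What you need is that the flow of $\nabla f$ is complete in both time directions. Normalize so that $R+|\nabla f|^2=f$; with $R$ constant, $f-R$ grows exactly like $e^t$ along the flow, and $f$ is proper, so flow lines stay in compact sets for finite times. Since $\nabla_{\nabla f}A=2A^2-A$ is polynomial in $A$, each eigenvalue follows $\mu(t)=\mu_0/(2\mu_0+(1-2\mu_0)e^t)$. This blows up at a finite positive time if $\mu_0>\frac12$, and at a finite negative time if $\mu_0<0$. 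So the spectrum lies in $[0,\frac12]$. Then $\operatorname{tr}(A^2-\frac12A)=0$, with every term $\mu(\mu-\frac12)\le0$, forces $\mu\in\{0,\frac12\}$.

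The paper avoids this global input with a pointwise trick. Differentiating the constants $\operatorname{tr}A^2$ and then $\operatorname{tr}A^3$ along $\nabla f$ gives $\operatorname{tr}A^4=\frac12\operatorname{tr}A^3=\frac14\operatorname{tr}A^2$, hence $|A(A-\frac12 I)|^2=0$. Once $A^2=\frac12A$, your two displayed identities give $(\nabla_X\operatorname{Ric})(\nabla f)=0$ and $\nabla_{\nabla f}\operatorname{Ric}=0$. The commutator identity with $Y=\nabla f$ then yields radial flatness, exactly as in the paper.
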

\begin{remark}
It was recently proved by Esparza \cite{Carlos Esparza} that every
shrinking gradient K\"ahler--Ricci soliton is simply connected.
We will not need this result here, since in the present setting
simple connectedness follows directly from the rigidity structure
and the K\"ahler geometry of the zero section.
\end{remark}
\begin{remark}
An analogous argument applies to complete gradient K\"ahler--Ricci expanders,
with the usual possibility of a quotient in the rigid splitting.
\end{remark}

While this manuscript was being prepared, Hor\'acio
\cite{Horacio2026} posted a preprint proving the corresponding rigidity
statement for complete nonsteady gradient K\"ahler--Ricci solitons with
constant scalar curvature.  That work also uses the condition
$\mathcal{L}_{\nabla f}\operatorname{Ric}=0$ and derives
$A^2=\lambda A$ for the Ricci endomorphism $A$.  Its final rigidity step
invokes Proposition~1.3(2) of Petersen--Wylie.  In Section~3 below, we give
a direct derivation of radial flatness from
$\mathcal{L}_{\nabla f}\operatorname{Ric}=0$ and $A^2=\lambda A$; in
particular, our argument does not use Proposition~1.3(2) of
Petersen--Wylie.

The paper is organized as follows. In Section~2, we prove that a K\"ahler--Ricci
shrinker with constant scalar curvature satisfies
$\mathcal{L}_{\nabla f}\operatorname{Ric}=0$. In Section~3, we establish a
Riemannian rigidity criterion for complete nonsteady gradient Ricci solitons
with constant scalar curvature by proving radial flatness directly. Combining
these two results proves Theorem~\ref{mainthm}.

{\bf{Disclosure on AI assistance.}} The research problem and the principal mathematical ideas were formulated by the authors. The authors used AI-assisted tools,
principally ChatGPT 5.6 Sol to assist with checking some computations. The authors verified and completed all mathematical arguments and take
full responsibility for the content.

\section{K\"ahler--Ricci shrinkers with constant scalar curvature}

Let $(M^{2m},g,f,J)$ be a gradient K\"ahler--Ricci shrinker satisfying
\begin{equation}\label{eq:shrinker}
\operatorname{Ric}+\nabla^2 f=\frac12 g.
\end{equation}
We denote by $\rho$ the Ricci form,
\[
\rho(X,Y)=\operatorname{Ric}(JX,Y).
\]
Since $(M,g,J)$ is K\"ahler, the Ricci form is closed:
\[
d\rho=0.
\]

Recall Cartan's formula
\[
\mathcal{L}_X\alpha=d(\iota_X\alpha)+\iota_Xd\alpha
\]
for any differential form $\alpha$.

\begin{theorem}\label{thm:lie-ricci}
Let $(M^{2m},g,f,J)$ be a gradient K\"ahler--Ricci shrinker satisfying
\eqref{eq:shrinker}. If the scalar curvature is constant, then
\[
\mathcal{L}_{\nabla f}\operatorname{Ric}=0.
\]
\end{theorem}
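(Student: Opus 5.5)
The plan is to work with the Ricci form $\rho$ instead of $\operatorname{Ric}$ directly, and to use Cartan's formula together with the fact that $\nabla f$ preserves $J$. On a K\"ahler manifold $\operatorname{Ric}$ is $J$-invariant, so
\[
\rho(X,JY)=\operatorname{Ric}(JX,JY)=\operatorname{Ric}(X,Y).
\]
This gives $\operatorname{Ric}=\rho(\cdot,J\cdot)$. By the Leibniz rule for the Lie derivative,
\[
(\mathcal{L}_{\nabla f}\operatorname{Ric})(X,Y)=(\mathcal{L}_{\nabla f}\rho)(X,JY)+\rho\bigl(X,(\mathcal{L}_{\nabla f}J)Y\bigr).
\]
So the statement reduces to two facts: $\mathcal{L}_{\nabla f}\rho=0$ and $\mathcal{L}_{\nabla f}J=0$.

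\emph{Step 1: $\mathcal{L}_{\nabla f}J=0$.} This is the holomorphy of $\nabla f$. For completeness I would verify it from \eqref{eq:shrinker}. Since $\nabla$ is torsion free and $\nabla J=0$, for any vector field $V$ one has
\[
(\mathcal{L}_V J)X=J\nabla_X V-\nabla_{JX}V.
\]
For $V=\nabla f$ we have $\nabla^2 f=\frac12 g-\operatorname{Ric}$, and both terms on the right are $J$-invariant. Hence the endomorphism $X\mapsto\nabla_X\nabla f$ commutes with $J$, and $\mathcal{L}_{\nabla f}J=0$.

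\emph{Step 2: $\mathcal{L}_{\nabla f}\rho=0$.} Since $d\rho=0$, Cartan's formula gives $\mathcal{L}_{\nabla f}\rho=d(\iota_{\nabla f}\rho)$. Now
\[
(\iota_{\nabla f}\rho)(Y)=\operatorname{Ric}(J\nabla f,Y)=\langle J\,\operatorname{Ric}(\nabla f),Y\rangle,
\]
using again that the Ricci endomorphism commutes with $J$. The standard soliton identity $\operatorname{Ric}(\nabla f)=\frac12\nabla R$ comes from tracing the contracted second Bianchi identity against \eqref{eq:shrinker}. Because $R$ is constant, $\operatorname{Ric}(\nabla f)=0$. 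Thus $\iota_{\nabla f}\rho\equiv 0$, and so $\mathcal{L}_{\nabla f}\rho=0$.

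Combining the two steps in the Leibniz formula gives $\mathcal{L}_{\nabla f}\operatorname{Ric}=0$. The only real input is the identity $\operatorname{Ric}(\nabla f)=\frac12\nabla R$; this is the one place where constancy of $R$ enters, and it is the step I regard as the crux. Everything else is formal bookkeeping of $J$-invariance, with sign and ordering conventions for $\rho$ to be checked carefully.
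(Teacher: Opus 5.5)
Your proposal is correct and follows the paper's proof essentially step for step. You show $\iota_{\nabla f}\rho=0$ from $\operatorname{Ric}(\nabla f,\cdot)=\frac12 dR=0$, get $\mathcal{L}_{\nabla f}\rho=0$ from Cartan's formula with $d\rho=0$, get $\mathcal{L}_{\nabla f}J=0$ from the $J$-invariance of $\nabla^2 f=\frac12 g-\operatorname{Ric}$, and finish with the Leibniz rule. The only cosmetic difference is that you expand $\operatorname{Ric}=\rho(\cdot,J\cdot)$ directly, whereas the paper expands $\rho=\operatorname{Ric}(J\cdot,\cdot)$ and then uses the invertibility of $J$.
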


\begin{proof}
For a gradient Ricci soliton satisfying \eqref{eq:shrinker}, one has
\begin{equation}\label{eq:gradR}
\nabla R=2\operatorname{Ric}(\nabla f,\cdot).
\end{equation}
Since $R$ is constant,
\[
\operatorname{Ric}(\nabla f,\cdot)=0.
\]

Using the $J$-invariance of the Ricci tensor, for every vector field $Y$,
\[
\operatorname{Ric}(J\nabla f,Y)
=-\operatorname{Ric}(\nabla f,JY).
\]
Consequently,
\[
(\iota_{\nabla f}\rho)(Y)
=\rho(\nabla f,Y)
=\operatorname{Ric}(J\nabla f,Y)
=-\operatorname{Ric}(\nabla f,JY)
=0.
\]
Thus $\iota_{\nabla f}\rho=0$. Since $d\rho=0$, Cartan's formula gives
\begin{equation}\label{eq:lie-rho}
\mathcal{L}_{\nabla f}\rho=0.
\end{equation}

We next observe that
\begin{equation}\label{eq:lie-J}
\mathcal{L}_{\nabla f}J=0.
\end{equation}
Indeed, since $\nabla J=0$, for every vector field $Y$,
\begin{align*}
(\mathcal{L}_{\nabla f}J)(Y)
&=[\nabla f,JY]-J[\nabla f,Y]\\
&=-\nabla_{JY}\nabla f+J\nabla_Y\nabla f.
\end{align*}
Let $A$ be the Ricci endomorphism defined by
\[
g(AX,Y)=\operatorname{Ric}(X,Y).
\]
The soliton equation gives
\[
\nabla_Y\nabla f=\frac12Y-AY.
\]
Since the Ricci tensor is $J$-invariant, $AJ=JA$. Hence
\[
\nabla_{JY}\nabla f=J\nabla_Y\nabla f,
\]
which proves \eqref{eq:lie-J}.

Finally, since $\rho(X,Y)=\operatorname{Ric}(JX,Y)$,
\[
(\mathcal{L}_{\nabla f}\rho)(X,Y)
=(\mathcal{L}_{\nabla f}\operatorname{Ric})(JX,Y)
+\operatorname{Ric}((\mathcal{L}_{\nabla f}J)X,Y).
\]
Using \eqref{eq:lie-rho} and \eqref{eq:lie-J}, we conclude that
\[
(\mathcal{L}_{\nabla f}\operatorname{Ric})(JX,Y)=0
\]
for all $X,Y$. Since $J$ is invertible,
\[
\mathcal{L}_{\nabla f}\operatorname{Ric}=0.
\]
\end{proof}

\section{A Riemannian rigidity criterion}

In this section the argument is purely Riemannian.  We prove radial
flatness directly and therefore do not use Proposition~1.3 of
Petersen--Wylie \cite{Petersen-Wylie2}; only their rigidity
characterization \cite[Theorem~1.2]{Petersen-Wylie2} is used at the final
step.

\begin{theorem}\label{thm:riemannian-rigidity}
Let $(M^n,g,f,\lambda)$ be a complete nonsteady gradient Ricci soliton,
\[
\operatorname{Ric}+\nabla^2f=\lambda g,
\qquad \lambda\neq0.
\]
Assume that the scalar curvature $R$ is constant and
\[
\mathcal{L}_{\nabla f}\operatorname{Ric}=0.
\]
Let $A:TM\to TM$ denote the Ricci endomorphism. Then
\begin{equation}\label{eq:A-polynomial}
A(A-\lambda I)=0.
\end{equation}
Moreover,
\[
R(X,\nabla f)\nabla f=0
\]
for every vector field $X$. In particular, the soliton is radially flat
and hence rigid.
\end{theorem}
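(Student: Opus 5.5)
The plan is to compute $\mathcal{L}_{\nabla f}\operatorname{Ric}$ in terms of covariant derivatives and thereby turn the hypothesis into an ODE for $A$ along the flow of $\nabla f$. With that ODE and the constancy of $R$, I would force the spectrum of $A$ into $\{0,\lambda\}$, and then read off radial flatness from the standard soliton commutator identity.

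\textbf{Step 1 (the ODE).} Since $\nabla^2 f=\lambda g-\operatorname{Ric}$, for vector fields $X,Y$ we have
\[
(\mathcal{L}_{\nabla f}\operatorname{Ric})(X,Y)=(\nabla_{\nabla f}\operatorname{Ric})(X,Y)+\operatorname{Ric}(\nabla_X\nabla f,Y)+\operatorname{Ric}(X,\nabla_Y\nabla f).
\]
Substituting $\nabla_X\nabla f=\lambda X-AX$, the hypothesis becomes
\[
\nabla_{\nabla f}A=2(A^2-\lambda A)=:2B.
\]
Since $R$ is constant, $\nabla R=2\operatorname{Ric}(\nabla f,\cdot)$ gives $A\nabla f=0$. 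Tracing the ODE gives $0=\nabla_{\nabla f}R=2(|A|^2-\lambda R)$, so $|\operatorname{Ric}|^2=\lambda R$ is constant. Hence $A$ is bounded, and therefore $\nabla^2 f$ is bounded. Consequently $\nabla f$ grows at most linearly, and its flow is complete.

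\textbf{Step 2 ($A^2=\lambda A$); this is the main obstacle.} Fix a point $p$ and the integral curve $\gamma$ of $\nabla f$ through it; $\gamma$ is defined for all times. If $\nabla f(p)=0$, then $B(p)=0$ directly. Otherwise, take a parallel orthonormal frame along $\gamma$. In this frame $A(t)$ solves the matrix ODE $A'=2(A^2-\lambda A)$. The right-hand side is a polynomial in $A$, so the eigenvectors of $A(0)$ are preserved, and each eigenvalue solves $\mu'=2\mu(\mu-\lambda)$. Every solution of this scalar ODE with initial value outside the closed interval between $0$ and $\lambda$ blows up in finite time, forward or backward. But $|A|$ is constant, so the eigenvalues stay bounded for all $t\in\mathbb R$. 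Hence every eigenvalue $\mu_i$ lies in that interval, and each term $\mu_i(\mu_i-\lambda)\le 0$. On the other hand, $\sum_i\mu_i(\mu_i-\lambda)=|A|^2-\lambda R=0$. Therefore each $\mu_i\in\{0,\lambda\}$ and $B=0$, which proves \eqref{eq:A-polynomial}. As a consequence, $\nabla_{\nabla f}\operatorname{Ric}=0$.

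\textbf{Step 3 (radial flatness).} I would use the soliton identity $(\nabla_X\operatorname{Ric})(Y,Z)-(\nabla_Y\operatorname{Ric})(X,Z)=\pm R(X,Y,Z,\nabla f)$, which comes from commuting third derivatives of $f$. Take $Y=\nabla f$ and $Z=W$ arbitrary. The term $(\nabla_{\nabla f}\operatorname{Ric})(X,W)$ vanishes by Step 2. Differentiating $\operatorname{Ric}(\nabla f,\cdot)=0$ gives
\[
(\nabla_X\operatorname{Ric})(\nabla f,W)=-\operatorname{Ric}(\nabla_X\nabla f,W)=-g((\lambda A-A^2)X,W)=0.
\]
Hence $R(X,\nabla f,W,\nabla f)=0$ for all $X,W$, that is, $R(X,\nabla f)\nabla f=0$. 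The soliton is therefore radially flat with constant scalar curvature, and \cite[Theorem~1.2]{Petersen-Wylie2} yields rigidity.
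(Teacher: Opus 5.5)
Your proof is correct. Steps 1 and 3 are essentially the paper's argument. The paper gets $|\operatorname{Ric}|^2=\lambda R$ from $\Delta_f R=2\lambda R-2|\operatorname{Ric}|^2$ rather than by tracing the ODE, and its radial-flatness computation is exactly your commutator identity with $Y=\nabla f$.

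The genuine difference is Step 2. The paper stays pointwise. Since $\operatorname{tr}(A^2)=\lambda R$ is constant, differentiating it along $V$ with $\nabla_VA=2A(A-\lambda I)$ gives $\operatorname{tr}(A^3)=\lambda\operatorname{tr}(A^2)$, so $\operatorname{tr}(A^3)$ is constant too. Differentiating once more gives $\operatorname{tr}(A^4)=\lambda\operatorname{tr}(A^3)$. Hence $|A^2-\lambda A|^2=\operatorname{tr}(A^4)-2\lambda\operatorname{tr}(A^3)+\lambda^2\operatorname{tr}(A^2)=0$. This uses neither completeness nor the flow of $\nabla f$, so that step works on any open subset of a (possibly incomplete) soliton satisfying the hypotheses.

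Your argument is global and dynamical instead. You use completeness of $g$ and the bound $|A|^2=\lambda R$ to make integral curves of $\nabla f$ exist for all time. Then finite-time blow-up of the Riccati equation $\mu'=2\mu(\mu-\lambda)$ traps the spectrum in the closed interval between $0$ and $\lambda$. After that, the single identity $\sum_i\mu_i(\mu_i-\lambda)=0$ plus a sign argument finishes.

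This trades two further derivatives for a sign argument, and it gives a clear picture: eigenvalues must be bounded, globally defined orbits of the Riccati flow. The cost is that it relies essentially on completeness. It also needs one sentence you only gestured at: $\sum_i\mu_i(t)P_i$, with $P_i$ the spectral projections of $A(0)$, solves the same locally Lipschitz matrix ODE. By uniqueness it agrees with the actual $A(t)$ on its maximal interval, so boundedness of $A(t)$ is what rules out blow-up of any $\mu_i$.
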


\begin{proof}
Set
\[
V=\nabla f,
\]
and define $A$ by
\[
\operatorname{Ric}(X,Y)=g(AX,Y).
\]
The soliton equation gives
\begin{equation}\label{eq:hess-general}
\nabla_XV=(\lambda I-A)X.
\end{equation}

For arbitrary vector fields $X,Y$, the assumption
$\mathcal{L}_{V}\operatorname{Ric}=0$ gives
\begin{align*}
0
&=(\mathcal{L}_{V}\operatorname{Ric})(X,Y)\\
&=(\nabla_V\operatorname{Ric})(X,Y)
 +\operatorname{Ric}(\nabla_XV,Y)
 +\operatorname{Ric}(X,\nabla_YV).
\end{align*}
Using \eqref{eq:hess-general} and the self-adjointness of $A$, we obtain
\begin{equation}\label{eq:nablaVA-general}
\nabla_VA=2(A^2-\lambda A)
       =2A(A-\lambda I).
\end{equation}

For a gradient Ricci soliton one has
\begin{equation}\label{eq:scalar-drift-general}
\Delta_fR=2\lambda R-2|\operatorname{Ric}|^2.
\end{equation}
Since $R$ is constant,
\begin{equation}\label{eq:p2-general}
\operatorname{tr}(A^2)=|\operatorname{Ric}|^2=\lambda R.
\end{equation}
Thus $\operatorname{tr}(A^2)$ is constant. Differentiating in the
$V$-direction and using \eqref{eq:nablaVA-general}, we find
\begin{align*}
0
&=V\bigl(\operatorname{tr}(A^2)\bigr)\\
&=2\operatorname{tr}(A\nabla_VA)\\
&=4\left(\operatorname{tr}(A^3)
       -\lambda\operatorname{tr}(A^2)\right).
\end{align*}
Hence
\begin{equation}\label{eq:p3-general}
\operatorname{tr}(A^3)
=
\lambda\operatorname{tr}(A^2).
\end{equation}
In particular, $\operatorname{tr}(A^3)$ is constant. Differentiating
again,
\begin{align*}
0
&=V\bigl(\operatorname{tr}(A^3)\bigr)\\
&=3\operatorname{tr}(A^2\nabla_VA)\\
&=6\left(\operatorname{tr}(A^4)
       -\lambda\operatorname{tr}(A^3)\right),
\end{align*}
and therefore
\begin{equation}\label{eq:p4-general}
\operatorname{tr}(A^4)
=
\lambda\operatorname{tr}(A^3).
\end{equation}

Combining \eqref{eq:p3-general} and \eqref{eq:p4-general}, we obtain
\begin{align*}
|A(A-\lambda I)|^2
&=\operatorname{tr}(A^4)
  -2\lambda\operatorname{tr}(A^3)
  +\lambda^2\operatorname{tr}(A^2)\\
&=0.
\end{align*}
Consequently,
\[
A(A-\lambda I)=0.
\]
In particular, every eigenvalue of $A$ belongs to $\{0,\lambda\}$.
Equation \eqref{eq:nablaVA-general} now immediately gives
\begin{equation}\label{eq:nablaVAzero-general}
\nabla_VA=0.
\end{equation}

It remains to prove radial flatness. Set
\[
S=\nabla V=\nabla^2f=\lambda I-A.
\]
We use the curvature convention
\[
R(X,Y)Z
=
\nabla_X\nabla_YZ-\nabla_Y\nabla_XZ-\nabla_{[X,Y]}Z.
\]
For the associated $(0,4)$-tensor we write
\[
R(X,Y,Z,W)=g(R(X,Y)W,Z),
\]
so that, for orthonormal $X,Y$,
\[
\operatorname{Sec}(X,Y)=R(X,Y,X,Y).
\]
For the $(1,1)$-tensor $S=\nabla V$, the Ricci commutation identity gives
\begin{equation}\label{eq:hessian-commutator-general}
R(X,Y)V=(\nabla_XS)(Y)-(\nabla_YS)(X).
\end{equation}

Since $R$ is constant, the standard soliton identity
\[
\nabla R=2\operatorname{Ric}(V,\cdot)
\]
implies
\begin{equation}\label{eq:AVzero-general}
A(V)=0.
\end{equation}
Differentiating \eqref{eq:AVzero-general} in the direction $X$ gives
\[
0=(\nabla_XA)(V)+A(\nabla_XV).
\]
By \eqref{eq:hess-general} and $A(A-\lambda I)=0$,
\[
A(\nabla_XV)=A(\lambda I-A)X=0.
\]
Hence
\begin{equation}\label{eq:nablaXA-V-zero-general}
(\nabla_XA)(V)=0.
\end{equation}
On the other hand, we already know from
\eqref{eq:nablaVAzero-general} that
\[
\nabla_VA=0.
\]

Taking $Y=V$ in \eqref{eq:hessian-commutator-general} and using
$S=\lambda I-A$, we obtain
\begin{align*}
R(X,V)V
&=(\nabla_XS)(V)-(\nabla_VS)(X)\\
&=-(\nabla_XA)(V)+(\nabla_VA)(X)\\
&=0.
\end{align*}
Thus
\[
R(X,\nabla f)\nabla f=0
\]
for every vector field $X$, and the soliton is radially flat.

Since the scalar curvature is constant, Petersen--Wylie\'s rigidity
theorem \cite[Theorem~1.2]{Petersen-Wylie2} implies that the soliton is
rigid.
\end{proof}

\begin{proof}[Proof of Theorem~\ref{mainthm}]
By Theorem~\ref{thm:lie-ricci},
\[
\mathcal{L}_{\nabla f}\operatorname{Ric}=0.
\]
Applying Theorem~\ref{thm:riemannian-rigidity} with
$\lambda=\frac12$, we obtain
\[
A\left(A-\frac12I\right)=0
\]
and the soliton is rigid. Hence the Ricci eigenvalues are $0$ and
$\frac12$.

Since $A$ commutes with the complex structure $J$, both eigenspaces are
$J$-invariant and therefore have even real dimension. Write
\[
\operatorname{rank}A=2k,
\qquad 0\leq k\leq m.
\]
Then
\[
R=\operatorname{tr}A=k.
\]
Since
\[
A\left(A-\frac12I\right)=0,
\]
the soliton equation gives
\[
\nabla^2f
=
\frac12I-A
\geq0.
\]
Thus the potential function $f$ is convex.

By the rigidity theorem, $M$ is a flat vector bundle whose zero section
is an Einstein manifold $\Sigma^{2k}$ satisfying
\[
\operatorname{Ric}_{\Sigma}
=
\frac12g_{\Sigma},
\]
and, up to an additive constant, the potential function is
\[
f=\frac14|x|^2
\]
along the flat fibers. In particular,
\[
\Sigma=\{f=\min f\}.
\]

Since the Ricci endomorphism $A$ commutes with the complex structure
$J$, both its $0$- and $\frac12$-eigenspaces are $J$-invariant.
Consequently, the zero section $\Sigma$ is a K\"ahler submanifold and
\[
(\Sigma^{2k},g_{\Sigma},J_{\Sigma})
\]
is K\"ahler--Einstein with
\[
\operatorname{Ric}_{\Sigma}
=
\frac12g_{\Sigma}.
\]
Since $\operatorname{Ric}_{\Sigma}=\frac12 g_{\Sigma}>0$, the
Bonnet--Myers theorem implies that $\Sigma$ is compact. By a theorem of
Kobayashi \cite{Kobayashi1961}, every compact K\"ahler manifold with
positive definite Ricci tensor is simply connected. Hence $\Sigma$ is
simply connected.

On the other hand, the flat vector bundle $M$ deformation retracts
onto its zero section $\Sigma$. Hence
\[
\pi_1(M)\cong\pi_1(\Sigma)=0.
\]
Thus $M$ is simply connected.

Since a flat vector bundle over a simply connected base has trivial
holonomy, the flat bundle is trivial. Therefore
\[
M^{2m}
\cong
\Sigma^{2k}\times\mathbb R^{2(m-k)}.
\]
The $0$-eigenspace of $A$ is $J$-invariant, so the Euclidean factor
inherits its standard parallel complex structure. Hence
\[
\mathbb R^{2(m-k)}
\cong
\mathbb C^{m-k},
\]
and consequently
\[
(M^{2m},g,J)
\cong
\Sigma^{2k}\times\mathbb C^{m-k}
\]
holomorphically and isometrically. Renaming $\Sigma$ as $N$ gives
\[
\operatorname{Ric}_{g_N}=\frac12g_N,
\]
and completes the proof.
\end{proof}

\section{Further consequences}

The argument above also leads to a useful curvature identity when the Ricci
endomorphism satisfies the algebraic relation $A^2=\frac12A$, even without
assuming $\mathcal L_{\nabla f}\operatorname{Ric}=0$.

\begin{proposition}\label{prop:general-rank}
Let $(M^n,g,f)$ be a gradient shrinking Ricci soliton satisfying
\[
\operatorname{Ric}+\nabla^2f=\frac12g.
\]
Assume that the Ricci endomorphism $A$ satisfies
\[
A^2=\frac12A
\]
and has rank $k$. Then
\[
R=\frac{k}{2}.
\]
At any point choose an orthonormal Ricci eigenbasis
\[
\{e_1,\ldots,e_{n-k},e_{n-k+1},\ldots,e_n\}
\]
such that
\[
Ae_a=0,\qquad 1\leq a\leq n-k,
\]
and
\[
Ae_i=\frac12e_i,\qquad n-k+1\leq i\leq n.
\]
Writing
\[
K_{ij}=\operatorname{Sec}(e_i,e_j),
\]
one has
\begin{equation}\label{eq:partial-null-curvature}
|\nabla\operatorname{Ric}|^2
=
\sum_{1\leq a<b\leq n-k}K_{ab}.
\end{equation}
In particular, if the curvature contraction on the zero eigenspace satisfies
\[
\sum_{1\leq a<b\leq n-k}K_{ab}=0,
\]
then $\nabla\operatorname{Ric}=0$, and the soliton is rigid.
\end{proposition}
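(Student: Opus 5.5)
The plan is to combine the elliptic equation for the Ricci tensor of a soliton with the algebraic constraint $A^2=\frac12A$. The first step is pure algebra. Since $A^2=\frac12 A$, the eigenvalues of $A$ lie in $\{0,\frac12\}$, and rank $k$ means exactly $k$ eigenvalues equal $\frac12$. Hence $R=\operatorname{tr}A=\frac k2$. Moreover $|\operatorname{Ric}|^2=\operatorname{tr}(A^2)=\frac12\operatorname{tr}A=\frac k4$, so both $R$ and $|\operatorname{Ric}|^2$ are constant. (This is consistent with \eqref{eq:scalar-drift-general}: with $\lambda=\frac12$, $\Delta_fR=R-2|\operatorname{Ric}|^2=0$.)

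Next I will use the standard soliton identity
\[
\Delta_f R_{ij}=R_{ij}-2R_{ikjl}R_{kl},
\]
with the sign convention chosen so that $R_{ikik}=K_{ik}$ in an orthonormal frame. Contracting with $R_{ij}$ gives
\[
\tfrac12\Delta_f|\operatorname{Ric}|^2=|\nabla\operatorname{Ric}|^2+|\operatorname{Ric}|^2-2R_{ikjl}R_{ij}R_{kl}.
\]
Since $|\operatorname{Ric}|^2$ is constant, the left side vanishes, and so
\[
|\nabla\operatorname{Ric}|^2=2R_{ikjl}R_{ij}R_{kl}-\tfrac k4.
\]
In the chosen eigenbasis $R_{ikjl}R_{ij}R_{kl}=\sum_{i,k}\lambda_i\lambda_kK_{ik}$. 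Only indices in the $\frac12$-eigenspace $P$ contribute, so this sum equals $\frac14\cdot 2\sum_{i<j\in P}K_{ij}$. Therefore
\[
|\nabla\operatorname{Ric}|^2=\sum_{i<j\in P}K_{ij}-\tfrac k4.
\]

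The key step, and the one needing care with signs and the double counting of ordered versus unordered pairs, is converting the $P$-block curvature sum into the zero-block sum. I will read off diagonal Ricci entries. Write $Z$ for the $0$-eigenspace indices.
\begin{itemize}
\item For $i\in P$ we have $\frac12=\operatorname{Ric}(e_i,e_i)=\sum_{j\neq i}K_{ij}$. Summing over $i\in P$ gives
\[
\tfrac k2=2\sum_{i<j\in P}K_{ij}+\sum_{i\in P,a\in Z}K_{ia}.
\]
\item For $a\in Z$ we have $0=\sum_{i\in P}K_{ai}+\sum_{b\in Z,b\ne a}K_{ab}$. Summing over $a\in Z$ gives
\[
\sum_{P\times Z}K=-2\sum_{a<b}K_{ab}.
\]
\end{itemize}
Substituting the second relation into the first yields $\sum_{i<j\in P}K_{ij}=\frac k4+\sum_{a<b}K_{ab}$. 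Plugging this into the previous formula gives \eqref{eq:partial-null-curvature}.

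Finally, if $\sum_{a<b}K_{ab}=0$, then $\nabla\operatorname{Ric}=0$. A parallel Ricci tensor has vanishing Cotton tensor, so the Weyl tensor is harmonic. The rigidity results of Fern\'andez-L\'opez--Garc\'ia-R\'io \cite{FR10} and Munteanu--Sesum \cite{Munteanu-Sesum} then give rigidity. Alternatively, since $R$ is constant and the rank of $A$ is constant, \cite{FR16} applies directly.
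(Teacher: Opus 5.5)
Your argument is correct and is essentially the paper's proof: the same two block sums of diagonal Ricci entries give $\sum_{i<j\in P}K_{ij}=\frac k4+\sum_{a<b}K_{ab}$, and the same Bochner identity for $|\operatorname{Ric}|^2$ (with $R_{ikjl}R_{ij}R_{kl}=\frac12\sum_{i<j\in P}K_{ij}$) then yields $|\nabla\operatorname{Ric}|^2=\sum_{a<b}K_{ab}$. The only difference is the final rigidity step. The paper splits the universal cover along the parallel $0$- and $\frac12$-eigenspaces and uses the soliton equation on the flat factor, whereas you cite the harmonic-Weyl rigidity theorems or the constant-rank criterion of Fern\'andez-L\'opez--Garc\'{i}a-R\'{i}o. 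Both routes are legitimate, and the constant-rank criterion (as quoted in the introduction) in fact needs only $A^2=\frac12A$ with rank $k$, since then $R=\frac k2$ is constant.
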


\begin{proof}
Since $A$ is self-adjoint and satisfies $A^2=\frac12A$, its eigenvalues are
$0$ and $\frac12$. Hence
\[
R=\operatorname{tr}A=\frac{k}{2},
\qquad
|\operatorname{Ric}|^2=\operatorname{tr}(A^2)=\frac{k}{4}.
\]
In particular, $|\operatorname{Ric}|^2$ is constant.

Set
\[
S_{00}=\sum_{1\leq a<b\leq n-k}K_{ab},
\qquad
S_{0+}=\sum_{\substack{1\leq a\leq n-k\\ n-k+1\leq i\leq n}}K_{ai},
\qquad
S_{++}=\sum_{n-k+1\leq i<j\leq n}K_{ij}.
\]
Summing the identities
\[
\operatorname{Ric}(e_a,e_a)=0,
\qquad 1\leq a\leq n-k,
\]
gives
\begin{equation}\label{eq:sum-zero-block}
2S_{00}+S_{0+}=0.
\end{equation}
Likewise, summing
\[
\operatorname{Ric}(e_i,e_i)=\frac12,
\qquad n-k+1\leq i\leq n,
\]
gives
\begin{equation}\label{eq:sum-positive-block}
\frac{k}{2}=S_{0+}+2S_{++}.
\end{equation}
Eliminating $S_{0+}$ from \eqref{eq:sum-zero-block} and
\eqref{eq:sum-positive-block}, we obtain
\begin{equation}\label{eq:Spp-S00}
S_{++}=\frac{k}{4}+S_{00}.
\end{equation}

For the normalization used here, the Bochner formula for the Ricci tensor is
\[
\frac12\Delta_f|\operatorname{Ric}|^2
=
|\nabla\operatorname{Ric}|^2
+|\operatorname{Ric}|^2
-2R_{ikjl}R_{ij}R_{kl}.
\]
In the chosen Ricci eigenbasis,
\begin{align*}
R_{ikjl}R_{ij}R_{kl}
&=\sum_{i,k}R_{ikik}\lambda_i\lambda_k\\
&=2\sum_{i<k}K_{ik}\lambda_i\lambda_k\\
&=\frac12S_{++}.
\end{align*}
Since $|\operatorname{Ric}|^2=k/4$ is constant, the Bochner formula and
\eqref{eq:Spp-S00} yield
\[
0
=
|\nabla\operatorname{Ric}|^2+\frac{k}{4}-S_{++}
=
|\nabla\operatorname{Ric}|^2-S_{00}.
\]
This proves \eqref{eq:partial-null-curvature}. If $S_{00}=0$, then
$\nabla\operatorname{Ric}=0$. The parallel splitting of the $0$- and
$\frac12$-eigenspaces gives the product decomposition on the universal cover;
the soliton equation shows that the potential is Gaussian on the flat factor.
Hence the soliton is rigid.
\end{proof}

The nullity-two case admits a particularly simple conclusion.

\begin{proposition}\label{prop:Rn-2}
Let $(M^n,g,f)$ be a gradient shrinking Ricci soliton satisfying
\[
\operatorname{Ric}+\nabla^2f=\frac12 g.
\]
Assume that the Ricci endomorphism has eigenvalues
\[
0,\ 0,\ \frac12,\ldots,\frac12.
\]
Then
\[
\nabla\operatorname{Ric}=0.
\]
In particular, the soliton is rigid.
\end{proposition}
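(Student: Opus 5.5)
By Proposition~\ref{prop:general-rank} with $k=n-2$, we have $A^2=\frac12 A$, $R=\frac{n-2}{2}$, and
\[
|\nabla\operatorname{Ric}|^2=\sum_{1\le a<b\le 2}K_{ab}=K_{12}=\operatorname{Sec}(e_1,e_2),
\]
where $\{e_1,e_2\}$ spans the kernel of $A$ at the given point. The plan is therefore to show that $K_{12}\le 0$ pointwise. Then $|\nabla\operatorname{Ric}|^2=K_{12}\ge 0$ forces $K_{12}=0$, hence $\nabla\operatorname{Ric}=0$. Rigidity then follows exactly as at the end of the proof of Proposition~\ref{prop:general-rank}: the parallel eigenspace distributions split the universal cover, and the potential is Gaussian on the flat factor.

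The sign of $K_{12}$ will come from the fact that the null distribution $\mathcal D=\ker A$ has rank $2$. Since the rank of $A$ is constant, $\mathcal D$ is a smooth rank-$2$ distribution, and $\operatorname{Ric}(X,Y)=0$ for $X,Y\in\mathcal D$. From \eqref{eq:sum-zero-block}, $2K_{12}+S_{0+}=0$, so it is equivalent to prove $S_{0+}\ge 0$. A cleaner route is to compute $K_{12}$ directly from $\nabla A$. Since $A^2=\frac12A$, differentiating gives
\[
(\nabla A)A+A(\nabla A)=\tfrac12\nabla A,
\]
so $\nabla_XA$ maps $\mathcal D$ into $\mathcal D^\perp$ and $\mathcal D^\perp$ into $\mathcal D$. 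In other words, $\nabla A$ is purely off-diagonal with respect to the splitting $\mathcal D\oplus\mathcal D^\perp$.

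The key identity is that, for $a,b\in\{1,2\}$, the Ricci identity for $A$ gives
\[
g\bigl(A\,R(e_1,e_2)e_a,e_b\bigr)-g\bigl(R(e_1,e_2)Ae_a,e_b\bigr)
\]
in terms of second derivatives of $A$. Since $Ae_a=0$, this contains only the mixed curvature terms. I would instead use the contracted second Bianchi identity for solitons, $\operatorname{div}\operatorname{Ric}=\frac12\nabla R=0$. Combined with the off-diagonal structure of $\nabla A$, it shows that $(\nabla_{e_i}A)e_a$, for $e_i\in\mathcal D^\perp$, is controlled by the second fundamental form of $\mathcal D$ and $\mathcal D^\perp$. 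The Gauss equation for the two-dimensional distribution $\mathcal D$ then expresses $K_{12}$ as minus a sum of squares of these off-diagonal components, plus a term involving the integrability (O'Neill) tensor of $\mathcal D$. The hope is that, when $\dim\mathcal D=2$, all the terms of $|\nabla\operatorname{Ric}|^2$ organize into $-|\cdot|^2$, giving $K_{12}\le0$.

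The main obstacle is this sign step. It has to use the dimension $2$ essentially, because for a higher-dimensional kernel $S_{00}$ need not have a sign. If the Gauss-equation route does not close, I would first try the weighted Bochner formula on the null distribution. Let $\omega$ be the area form of $\mathcal D$, which is locally well defined up to sign, and use $|\omega|^2=1$ together with the projection $P=I-2A$ onto $\mathcal D$. Computing $\Delta_f\operatorname{tr}(P)=0$ gives $\sum|\nabla P|^2$ in terms of $\operatorname{tr}(P\Delta_fP)$. Comparing this with the identity $|\nabla\operatorname{Ric}|^2=\frac14|\nabla P|^2=K_{12}$ should let the rank-$2$ Gauss equation produce the required inequality $K_{12}\le0$.
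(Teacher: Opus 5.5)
Your reduction via Proposition~\ref{prop:general-rank} to $|\nabla\operatorname{Ric}|^2=K_{12}$ is correct. But the step that carries all the content, the sign of $K_{12}$, is never proved, so there is a genuine gap.

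The Gauss/O'Neill route is stated only as a hope. The intrinsic curvature of the leaves of $\mathcal D$ (if $\mathcal D$ is even integrable) has no a priori sign, and you give no formula expressing $K_{12}$ as $-|\cdot|^2$. The fallback with $P=I-2A$ is circular. From $P^2=P$ and $\operatorname{tr}P=2$ one gets $|\nabla P|^2=-\operatorname{tr}(P\Delta_fP)$. Inserting $\Delta_fR_{ij}=R_{ij}-2R_{ikjl}R_{kl}$ turns this into $|\nabla P|^2=4S_{00}$, which is exactly $|\nabla\operatorname{Ric}|^2=K_{12}$ again and carries no sign information. Both routes manipulate only $A^2=\frac12A$ and curvature identities. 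Neither uses the vector field $\nabla f$, and that is where the missing information lives.

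The missing idea is that $\nabla f\in\mathcal D$: since $R$ is constant, $A\nabla f=\frac12\nabla R=0$. So at a regular point of $f$ you can take $e_1=\nabla f/|\nabla f|$. Then $K_{12}$ is the radial curvature $\operatorname{Sec}(e_2,\nabla f)$, which can be computed exactly rather than estimated. Set $V=\nabla f$ and $S=\nabla V=\frac12I-A$. Combine the identity $R(X,V)V=(\nabla_XS)V-(\nabla_VS)X$ with $(\nabla_XA)V=-A(\frac12I-A)X$, obtained by differentiating $AV=0$. This gives
\[ g(R(X,V)V,Y)=(\nabla_V\operatorname{Ric})(X,Y)+g\bigl(A(\tfrac12I-A)X,Y\bigr), \]
which is the Petersen--Wylie identity used in the paper. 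The last term vanishes since $A^2=\frac12A$. For $X=Y=e_2$ the first term vanishes by the off-diagonal property of $\nabla_VA$ that you already derived.

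Hence $K_{12}=0$ on the regular set of $f$, so $\nabla\operatorname{Ric}=0$ there. This set is dense: if $\nabla f\equiv0$ on an open set, then $A=\frac12I$ there, contradicting the zero eigenvalues. Continuity then gives $\nabla\operatorname{Ric}=0$ everywhere.

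This is also where the rank two of the kernel genuinely enters. $\mathcal D$ is spanned by $\nabla f$ and one further direction, so $S_{00}$ reduces to a single radial curvature.
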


\begin{proof}
The assumptions imply
\[
R=\frac{n-2}{2}
\]
and therefore, by the standard soliton identity
$\nabla R=2\operatorname{Ric}(\nabla f,\cdot)$,
\[
A(\nabla f)=0.
\]
At a regular point of $f$, choose an orthonormal Ricci eigenbasis
\[
e_1=\frac{\nabla f}{|\nabla f|},\qquad e_2,\ldots,e_n,
\]
such that
\[
Ae_1=Ae_2=0,
\qquad
Ae_i=\frac12e_i,\quad i=3,\ldots,n.
\]
Since $A^2=\frac12A$, differentiating this identity in the direction
$V=\nabla f$ and evaluating on $e_2$ gives
\begin{equation}\label{eq:diag-nablaV-A-null}
\langle(\nabla_VA)e_2,e_2\rangle=0.
\end{equation}

For a shrinking gradient Ricci soliton with constant scalar curvature,
Petersen--Wylie \cite[Lemma~2.5]{Petersen-Wylie2} gives
\begin{equation}\label{eq:radial-Ric-identity-nullity-two}
(\nabla_V\operatorname{Ric})(X,Y)
+g\left(A\left(\frac12I-A\right)X,Y\right)
=g(R(X,V)V,Y).
\end{equation}
Evaluating \eqref{eq:radial-Ric-identity-nullity-two} on $(e_2,e_2)$ and
using $A(A-\frac12I)=0$ together with
\eqref{eq:diag-nablaV-A-null}, we obtain
\[
g(R(e_2,V)V,e_2)=0.
\]
Thus
\[
K_{12}=0
\]
at every regular point. Proposition~\ref{prop:general-rank}, with
$k=n-2$, now gives
\[
|\nabla\operatorname{Ric}|^2=K_{12}=0
\]
on the regular set. The regular set is dense: otherwise $\nabla f$ would vanish on a
nonempty open set, so $\nabla^2f=0$ there and the soliton equation would give
$A=\frac12I$, contradicting the assumed zero eigenvalue. Hence, by continuity,
\[
\nabla\operatorname{Ric}=0
\]
on all of $M$. The soliton is therefore rigid.
\end{proof}

\begin{remark}\label{rem:Cheng-Zhou}
Proposition~\ref{prop:Rn-2} gives an alternative way to complete the final
rigidity step in the proof of Cheng--Zhou \cite{Cheng-Zhou}. In the normalized
four-dimensional case considered there, one has $R=1$. At the final stage of
their argument, Cheng and Zhou obtain that the Ricci tensor has rank two.
Together with $\operatorname{Ric}\geq0$, $R=1$, and
$|\operatorname{Ric}|^2=\frac12$, this yields the spectrum
\[
0,\quad 0,\quad \frac12,\quad \frac12.
\]
Once this rank-two spectral conclusion has been reached,
Proposition~\ref{prop:Rn-2} gives directly
\[
\nabla\operatorname{Ric}=0,
\]
and hence the rigid splitting. Thus the proposition provides an alternative
proof of the last geometric step once the rank-two conclusion of
Cheng--Zhou has been established.
\end{remark}

\bibliographystyle{amsplain}

\end{document}